\newtheorem{Theorem}{Theorem}[section]
\newtheorem{Lemma}[Theorem]{Lemma}
\newtheorem{Corollary}[Theorem]{Corollary}
\newtheorem{Definition}[Theorem]{Definition}
\theoremstyle{definition}
\newtheorem{Comment}[Theorem]{Comment}
\newcommand{\arxiv}[1]{\href{http://arxiv.org/abs/#1}{\tt arXiv:\nolinkurl{#1}}}
\def\fsl{\mathfrak{sl}}
\newcommand{\bc}{\mathbb{C}}
\newcommand{\End}{\operatorname{End}}
\newcommand{\Flip}{\mbox{Flip}}
\newcommand{\Id}{\operatorname{Id}}
\newcommand{\DRIBBON}{\mathcal{DRIBBON}}
\newcommand{\RIBBON}{\mathcal{RIBBON}}
\newcommand{\DR}{\mathcal{DRIBBON}}
\newcommand{\cF}{\mathcal{F}}
\newcommand{\ev}{\operatorname{ev}}
\newcommand{\coev}{\operatorname{coev}}
\newcommand{\qtr}{\operatorname{qtr}}
\newcommand{\coqtr}{\operatorname{coqtr}}
\newcommand{\flip}{\operatorname{Flip}}
\newcommand{\trace}{\operatorname{trace}}
\newcommand{\br}{\operatorname{br}}
\theoremstyle{definition}
\newtheorem{Example}[Theorem]{Example}
\newcommand{\mathfig}[2]{{\hspace{-3pt}\begin{array}{c}%
  \raisebox{-2.5pt}{\includegraphics[width=#1\textwidth]{#2}}
\end{array}\hspace{-3pt}}}
\begin{document}

\title[A minus sign]{A minus sign that used to annoy me but now I know why it is there  \\ \small{(two constructions of the Jones polynomial)}}

\author{Peter Tingley}
\address{Department of Mathematics and Statistics, Loyola University, Chicago, IL}
\email{ptingley@luc.edu}

\begin{abstract}
We consider two well known constructions of link invariants. One uses skein theory: you resolve each crossing of the link as a linear combination of things that don't cross, until you eventually get a linear combination of links with no crossings, which you turn into a polynomial. The other uses quantum groups: you construct a functor from a topological category to some category of representations in such a way that (directed framed) links get sent to endomorphisms of the trivial representation, which are just rational functions. Certain instances of these two constructions give rise to essentially the same invariants, but when one carefully matches them there is a minus sign that seems out of place. We discuss exactly how the constructions match up in the case of the Jones polynomial, and where the minus sign comes from. On the quantum group side, one is led to use a non-standard ribbon element, which then allows one to consider a larger topological category. 
\end{abstract}

\maketitle

\tableofcontents

\section*{Introduction} 

This expository article begins by briefly explaining two constructions of the Jones polynomial (neither of which is Jones' original construction \cite{Jones:1985}). The first is the skein-theoretic construction using the Kauffman bracket \cite{Kauffman:1987}. The second is as a $U_q(\fsl_2)$ quantum group link invariant. We then discuss how the two constructions are related. 

The Kauffman bracket is an isotopy invariant of framed links, but the functor used in the quantum group construction involves a category where morphisms are tangles of \emph{directed} framed links. 
However, in the case we consider, the final quantum group invariant does not in fact depend on the directing, and, up to an annoying sign, it agrees with the Kauffman bracket. 
In these notes we explain the annoying sign and describe how the skein relations used in the Kauffman bracket arise naturally in the quantum group construction. We also discuss how to modify the quantum group construction by using the non-standard ribbon element from \cite{half_twist}. In this way one obtains a functor from a category whose morphisms are tangles of \emph{undirected} framed links, and the annoying minus sign disappears!

After developing these ideas, we give one more justifications for using the non-standard ribbon element: it allows one to give an algebraic operation corresponding to twisting a ribbon by 180 degrees. This is discussed in more detail in \cite{half_twist}.

The sign issue discussed here has of course been noticed many times before, and much of the content of these notes can be found in, for instance, \cite[Appendix H]{Ohtsuki}. One can describe the sign precisely, so in a sense there is no problem, but one hopes for a cleaner solution, with fewer (or at least better explained) signs. 
Using the non-standard ribbon element is just one way to achieve this. Another approach, which comes up in \cite{KR:1989, MPS, S}, modifies the braiding instead of the ribbon element; this works, but has the disadvantage that, at $q=0$, the braiding does not descend to the usual symmetric structure. Both this and our approach essentially boil down to the following: One must choose a square root of $q$ both in defining the braiding and in defining the ribbon element, and things work a bit better if one makes different choices (i.e. $\pm q^{1/2}$) in the two places. 
Yet another approach, which is discussed in \cite{CMW}, is to modify the topological category by using  ``disoriented tangles." 

\subsection*{Acknowledgements}
These notes are loosely based on a talk I first gave in 2008 at the University of Queensland in Brisbane Australia, and I thank Murray Elder and Ole Warnaar for organizing that visit. I also thank Noah Snyder for many interesting discussions, Stephen Sawin for comments on an early draft, and Scott Morrison for encouraging me to clean up these notes for publication. This work was partially supported by Australia Research Council grant DP0879951 and NSF grants DMS-0902649 and DMS-1265555.

\section{Knots, links, link diagrams, and some variants}  
A link (as one expects) is a collection of finitely many circles smoothly embedded in ${\Bbb R}^3$ with no intersections. These are considered up to isotopy, which means if you can move between two links without ever having the strands cross then they are the same. 

One can represent a link $L$ with a {\it link diagram}. This is a flattening of the link into the plane, where at each crossing one keeps track of which strand is on top. We will always assume that the curves which appear in the diagram are all smooth, that the diagram only has simple crossings (i.e. only 2 strands can cross at a single point), and that the curves are never tangent. Certainly any link can be represented this way, although this representation is not unique. One important fact about knot theory is that, given any two diagrams that represent the same link, one can be transformed to the other using only the local {\it Reidemeister moves}:

\begin{center}
 \begin{tikzpicture}[scale=0.26]
\draw[line width = 0.06cm] (0,0) .. controls (0,2) and (1,4) .. (2,4) .. controls (3,4) and (3,1) .. (1.1,2.9);

\draw[line width = 0.06cm] (0.65, 3.35) .. controls (0.25, 3.75) and (0,5) .. (0,6); 
\node at (5,2) {=};

\draw[line width = 0.06cm] (6.5,0)--(6.5,6);
\node at (7.3,0) {,};
 \end{tikzpicture}
 \hspace{0.1cm}
\begin{tikzpicture}[scale=0.39]

\draw[line width = 0.06cm] (0,0) .. controls (2,2) and (2,2) ..  (0,4);

\draw[line width = 0.25cm, color=white] (2,0) .. controls (0,2) and (0,2) ..  (2,4);

\draw[line width = 0.06cm] (2,0) .. controls (0,2) and (0,2) ..  (2,4);

\node at (3,2) {=};

\draw[line width = 0.06cm] (4,0)--(4,4);

\draw[line width = 0.06cm] (5,0)--(5,4);

\node at (6,0){,};

\end{tikzpicture}
 \hspace{0.1cm}
\begin{tikzpicture}[scale=0.26]

\draw[line width = 0.06cm] (2,0) .. controls (0,3) and (0,3) ..  (2,6);

\draw[line width = 0.23cm, color=white] (0,0)--(4,6);
\draw[line width = 0.23cm, color=white] (4,0)--(0,6);

\draw[line width = 0.06cm] (0,0)--(4,6);

\draw[line width = 0.2cm, color=white] (4,0)--(0,6);

\draw[line width = 0.06cm] (4,0)--(0,6);

\node at (5,3) {=};

\end{tikzpicture}
\begin{tikzpicture}[scale=0.26]

\draw[line width = 0.06cm] (2,0) .. controls (4,3) and (4,3) ..  (2,6);

\draw[line width = 0.23cm, color=white] (0,0)--(4,6);
\draw[line width = 0.23cm, color=white] (4,0)--(0,6);

\draw[line width = 0.06cm] (0,0)--(4,6);

\draw[line width = 0.2cm, color=white] (4,0)--(0,6);

\draw[line width = 0.06cm] (4,0)--(0,6);

\node at (5,0) {,};

\end{tikzpicture}
 \hspace{0.1cm}
\begin{tikzpicture}[scale=0.26]

\draw[line width = 0.23cm, color=white] (0,0)--(4,6);
\draw[line width = 0.23cm, color=white] (4,0)--(0,6);

\draw[line width = 0.06cm] (0,0)--(4,6);

\draw[line width = 0.2cm, color=white] (4,0)--(0,6);

\draw[line width = 0.06cm] (4,0)--(0,6);

\draw[line width = 0.23cm, color=white] (2,0) .. controls (0,3) and (0,3) ..  (2,6);
\draw[line width = 0.06cm] (2,0) .. controls (0,3) and (0,3) ..  (2,6);

\node at (5,3) {=};

\end{tikzpicture}
\begin{tikzpicture}[scale=0.26]

\draw[line width = 0.23cm, color=white] (0,0)--(4,6);
\draw[line width = 0.23cm, color=white] (4,0)--(0,6);

\draw[line width = 0.06cm] (0,0)--(4,6);

\draw[line width = 0.2cm, color=white] (4,0)--(0,6);

\draw[line width = 0.06cm] (4,0)--(0,6);

\draw[line width = 0.23cm, color=white] (2,0) .. controls (4,3) and (4,3) ..  (2,6);
\draw[line width = 0.06cm] (2,0) .. controls (4,3) and (4,3) ..  (2,6);

\node at (5,0) {.};

\end{tikzpicture}

\end{center}

\noindent However, actually doing so can be difficult. Even more difficult is showing that one {\it cannot} transform one diagram to another. That is, showing that two links are in fact different. To do that, one looks for an invariant: A function on link diagrams which doesn't change when you do a Reidemeister move. Then, if the invariant is different for two diagrams, the corresponding links themselves are different (i.e. not related by isotopy).

In fact, we need a few variants of links/link diagrams. Sometimes we must work with {\it directed} links, which means that each strand gets an arrow pointed along it in one of the two possible directions, and sometimes we work with {\it framed} links, which means links tied out of flat ribbons (so, you can tell if the ribbon gets twisted). If we draw a framed diagram without indicating the framing explicitly, we mean that the ribbon is lying flat on the page; this is usually called the ``blackboard framing." For framed link diagrams, we will assume that all twists occur as full 360 degree twists (this in particular disallows links ties out of m\"obius strips), although this restriction will be weakened slightly in \S\ref{sec:ht}.

It remains true that one can move between any link diagrams for isotopic framed and/or directed links using Reidemeister moves, the only subtlety being that the one strand move becomes

\begin{center}
 \begin{tikzpicture}[scale=0.32, yscale=1]

\draw[line width = 0.6cm] (0,0) .. controls (0,2) and (1,4) .. (3,5) .. controls (5,5) and (4,1) .. (1.1,2.9);

\draw[line width = 0.6cm] (0.65, 3.35) .. controls (0.25, 3.75) and (0,5) .. (0,6); 

\draw[black!07!white, line width = 0.5cm] (0,0) .. controls (0,2) and (1,4) .. (3,5).. controls (5,5) and (4,1) .. (1.1,2.9);

\draw[black!07!white, line width = 0.5cm] (3,5) .. controls (5,5) and (4,1) .. (1.1,2.9);

\draw[black!07!white, line width = 0.5cm] (0.65, 3.35) .. controls (0.25, 3.75) and (0,5) .. (0,6); 

\draw[white, line width = 0.75cm] (0,0) .. controls (0,2) and (1,4) .. (2.65,4.85);
\draw[ line width = 0.6cm] (0,0) .. controls (0,2) and (1,4) .. (2.65,4.85);
\draw[black!07!white, line width = 0.5cm] (0,0) .. controls (0,2) and (1,4) .. (2.67,4.87);

 \draw node at (6,3) {=};
\draw node at (7.5,3) {\surtwist};
\draw node at (7.5,0) {\usrtwist};
\end{tikzpicture}
 \end{center}
 where each side represents a single framed strand.

\section{The Kauffman bracket construction}  \label{Kauffman_bracket} 

Up to a change in the variable $q$, the following is the well known construction of the Kauffman bracket \cite{Kauffman:1987}.

\begin{Definition} \label{Kauffman-simplifications}
Let $L$ be a link diagram. Simplify $L$ using the following relations until the result is a polynomial in $q^{1/2}$ and $q^{-{1/2}}$. That polynomial, denoted by $K(L)$, is the Kauffman bracket of the link diagram.

\begin{enumerate}
\setlength{\unitlength}{0.35cm}
\thicklines
\item  \label{ocross_Kauffman} 
\begin{picture}(15,2)

\put(1,0){
\begin{picture}(3,3)
\put(0,2){\line(1,-1){1.1}}
\put(2,0){\line(1,-1){1.1}}
\put(0,-1){\line(1,1){3}}
\end{picture}
}

\put(5,0.3){$=$}

\put(9,-1){\line(0,1){3}}
\put(12,-1){\line(0,1){3}}

\put(7,0.3){$q^{1/2}$}

\put(14,0.3){$+$}

\put(16,0.3){$q^{-1/2}$}

\put(20.5,-1){\oval(3,2)[t]} 
\put(20.5,2){\oval(3,2)[b]} 

\end{picture}
\vspace{0.5cm}

\item \label{circle_Kauffman}
\begin{picture}(10,3)
\put(4,0.4){\circle{2.8}}

\put(7.5,0.3){$=$}

\put(10,0.3){$-q-q^{-1}$}

\end{picture}
\vspace{0.5cm}

\item If two diagrams are disjoint, their Kauffman brackets multiply.
\end{enumerate}
Note that \eqref{ocross_Kauffman} depends on which strand is on top.
\end{Definition}

The Kauffman bracket is {\bf not} a link invariant; a simple check will show that it fails to respect the one strand Reidemeister move. But, as discussed in the previous section, the one strand Reidemeister move does not hold exactly for framed link diagrams. In fact, the problem is fixed by working with framed links and introducing the following
extra relation (here both sides represent a single framed string):
\begin{equation} \label{third-Kauff} 
\setlength{\unitlength}{0.42cm}
\begin{picture}(6,2.5)
\put(0,0.9){\surtwist}
\put(0,-1.1){\usrtwist}
\put(2.5,0.2){$=$}
\put(3.5,0.2){$-q^{3/2} $}
\put(6,0){\uid}
\put(6,-2){\uid \; .}
\end{picture}
\end{equation}

\vspace{1cm}
\noindent Note that the direction of the twist (clockwise or counter clockwise) matters. 

The following can be verified fairly easily by checking how the Kauffman bracket changes under each Reidemeister move. 

\begin{Theorem} (see e.g. \cite[Theorem 1.10]{Ohtsuki})
The Kauffman bracket from Definition \ref{Kauffman-simplifications} is an isotopy invariant of framed links.  \qed
\end{Theorem}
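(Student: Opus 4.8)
The plan is to reduce the statement to the Reidemeister calculus for \emph{framed} links and then to check that the Kauffman bracket is unchanged by each of the finitely many local moves that generate framed isotopy. Concretely, two link diagrams drawn flat on the page and carrying only isolated twists of the framing represent isotopic framed links exactly when they are related by a finite sequence of: planar isotopies; the Reidemeister~II and Reidemeister~III moves; the move replacing a curl (an ordinary Reidemeister~I kink) on an otherwise straight strand by a single twist of the matching handedness; and the move deleting two adjacent twists of opposite handedness. Ordinary Reidemeister~I is deliberately absent --- it alters the framing --- and this absence is exactly why twists must be carried in the diagrams at all. So it suffices to show (a) that $K(L)$ is well defined as a function of the diagram $L$, and (b) that it is invariant under each move on the list.

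For (a) I would follow Kauffman's original argument: applying~\eqref{ocross_Kauffman} to every crossing and~\eqref{third-Kauff} (together with its mirror, which assigns $-q^{-3/2}$ times a straight strand to the opposite twist) to every twist rewrites $K(L)$ as a fixed linear combination over $\bz[q^{1/2},q^{-1/2}]$ of diagrams that are disjoint unions of embedded circles; relation~\eqref{circle_Kauffman} and part~(iii) of Definition~\ref{Kauffman-simplifications} then send each such diagram to $(-q-q^{-1})^{c}$, where $c$ is its number of circles. This prescription does not depend on the order in which the relations are applied, so $K(L)$ is genuinely a function of the diagram, and in particular it is invariant under planar isotopy because every relation used is local.

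For (b) the one computation with content is Reidemeister~II: resolving both crossings of the clasp with~\eqref{ocross_Kauffman} produces four terms; the two ``parallel'' smoothings combine with prefactor $q^{1/2}q^{-1/2}=1$ to give the trivial two-strand tangle, while the two ``turnback'' smoothings give the same turnback tangle, one of them carrying an extra closed circle, and after applying~\eqref{circle_Kauffman} to that circle the turnback contributions cancel. Reidemeister~III then follows formally: resolve the single crossing by which the two sides of the move differ and apply Reidemeister~II invariance to each of the two resulting diagrams. The curl/twist compatibility is another direct expansion: resolving the self-crossing of a curl gives $q^{1/2}$ times a strand disjoint from a circle plus $q^{-1/2}$ times a strand, i.e.\ $\bigl(q^{1/2}(-q-q^{-1})+q^{-1/2}\bigr)=-q^{3/2}$ times a strand, which is precisely the value~\eqref{third-Kauff} attaches to a single twist of the corresponding handedness; and deleting a pair of opposite twists multiplies the bracket by $(-q^{3/2})(-q^{-3/2})=1$.

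The main obstacle is not any individual calculation but assembling the framework correctly: one must use the generating set of moves for \emph{framed} (rather than ordinary) link isotopy, and one must also know that a twist may be slid freely along a strand and past local maxima and minima. The latter is automatic here, since~\eqref{third-Kauff} replaces a twist by a scalar and, as stated, holds regardless of the directing of the ribbon, so every such slide leaves $K$ untouched. Once this is in place, the proof consists of exactly the four local checks above, of which only Reidemeister~II requires any real bookkeeping --- tracking which smoothing spawns the closed loop, and hence where the factor $-q-q^{-1}$ enters.
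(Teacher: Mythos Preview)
The paper does not actually prove this theorem: it is stated with a citation to \cite[Theorem~1.10]{Ohtsuki} and an immediate \qed. So there is no in-paper argument to compare against; your sketch is precisely the standard Kauffman argument that the cited reference carries out.

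Your outline is correct in substance, but two local descriptions are slightly off. In the Reidemeister~II computation there are \emph{three} turnback states, not two: the two ``mixed'' smoothings each give a bare turnback (with coefficients $q$ and $q^{-1}$), and the state where both crossings are smoothed horizontally gives a turnback together with a closed circle (coefficient $1$). Applying \eqref{circle_Kauffman} to that circle yields a total turnback contribution of $q+q^{-1}+(-q-q^{-1})=0$, so your conclusion stands, but the sentence ``the two turnback smoothings give the same turnback tangle, one of them carrying an extra closed circle'' undercounts. For Reidemeister~III, the phrase ``the single crossing by which the two sides of the move differ'' is misleading, since both sides have three crossings; what you want is to resolve one chosen crossing on each side (the crossing not involved in the RII sub-clasp) and observe that the resulting pairs of diagrams match, one pair by planar isotopy and the other by the already-verified RII invariance. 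With these wording fixes your argument goes through.
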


We actually want an invariant of unframed links, but it is useful to first complicate things by considering links which are both framed and directed. 

\begin{Definition} \label{def_wbits} 
Consider a framed, directed link diagram. 
\begin{enumerate}
\item A positive crossing is a crossing of the form
\begin{center}
\begin{tikzpicture}[scale=0.5]
\selectcolormodel{gray}
\definecolor{lg}{gray}{0.94}

\draw[color=lg, line width=0.4cm] (3.25,0.25)--(0.25,3.25);

\draw[line width = 0.04cm]  (3.5,0.5)--(0.5,3.5);
\draw[line width = 0.04cm] (3,0)--(0,3);

\draw[color=white, line width=0.7cm] (0,0)--(3.5,3.5);

\draw[color=lg, line width=0.4cm] (0.25,0.25)--(3.25,3.25);

\draw[line width = 0.04cm]  (0.5,0)--(3.5,3);
\draw[line width = 0.04cm]  (0,0.5)--(3,3.5);

\draw[line width = 0.05cm, ->] (2.5,2.5)--(3.7,3.7);
\draw[line width = 0.05cm, ->] (1,2.5)--(-0.2,3.7);

\end{tikzpicture}
\end{center}
That is, a crossing such that, if you approach the crossing along the upper ribbon in the chosen direction and leave along the lower ribbon, you have made a left turn.
\item A negative crossing is a crossing of the form
\begin{center}
\begin{tikzpicture}[scale=0.5]
\selectcolormodel{gray}
\definecolor{lg}{gray}{0.94}

\draw[color=lg, line width=0.4cm] (0.25,0.25)--(3.25,3.25);

\draw[line width = 0.04cm]  (0.5,0)--(3.5,3);
\draw[line width = 0.04cm]  (0,0.5)--(3,3.5);

\draw[color=white, line width=0.7cm] (3.5,0)--(0,3.5);

\draw[color=lg, line width=0.4cm] (3.25,0.25)--(0.25,3.25);

\draw[line width = 0.04cm]  (3.5,0.5)--(0.5,3.5);
\draw[line width = 0.04cm] (3,0)--(0,3);

\draw[line width = 0.05cm, ->] (2.5,2.5)--(3.7,3.7);
\draw[line width = 0.05cm, ->] (1,2.5)--(-0.2,3.7);

\end{tikzpicture}
\end{center}
That is, a crossing such that, if you approach the crossing along the upper ribbon in the chosen direction, then leave along the lower ribbon, you have made a right turn.

\item A positive full twist is a twist of the form
\begin{center}

\setlength{\unitlength}{0.42cm}

\begin{picture}(2,5)
\put(-0.3,0.9){ \usrtwist }
\put(0.04,3.2){\surtwist}
\put(1.3,4.3){\vector(0,1){0.8}}

\end{picture}
\end{center}

\item A negative full twist is a twist in the opposite direction to a positive full twist.

\item The writhe of a link diagram $L$, denoted by $w(L)$, is the number of positive crossings minus the number of negative crossings plus the number of positive full twists minus the number of negative full twists. 
\end{enumerate}
\end{Definition}

The following are fundamental results in knot theory, but both can be checked directly. 

\begin{Lemma} (see \cite{Kauffman:1987})
The writhe $w(L)$ is an invariant of directed framed links. \qed
\end{Lemma}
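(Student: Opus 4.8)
The plan is to show that $w(L)$ is unchanged under the Reidemeister moves that generate isotopy of directed framed links, namely the framed (ribbon) versions: the double-twist cancellation move, the move that slides a strand over/under another (Reidemeister II for ribbons), and the triple-point move (Reidemeister III), together with planar isotopies. Note that the ordinary Reidemeister I move is \emph{not} allowed here (that is why the Kauffman bracket needs the framing); the ribbon version of Reidemeister I is exactly the double-twist cancellation, which is what forces us to count twists in the definition of $w(L)$.

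First I would handle planar isotopies: $w(L)$ is manifestly a count of local features (crossings and full twists) with signs attached, and the sign of a crossing or twist is defined purely in terms of the local picture together with the directing, so no planar deformation that avoids creating or destroying these features can change the count. Next, for the double-twist cancellation move \eqref{third-Kauff}: the two twists removed are a positive full twist and a negative full twist (they are mirror images, so opposite in sign, regardless of the directing of the ribbon, as noted after Definition \ref{def_wbits}), hence they contribute $+1-1=0$ to $w(L)$ and removing them changes nothing. For Reidemeister II: the two crossings created or destroyed are always one positive and one negative crossing — this is the standard fact, and one checks it by drawing the two strands with any choice of directings and observing that whichever strand is on top, one crossing is a left turn and the other a right turn — so again the net contribution is $0$. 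For Reidemeister III: the move permutes three strands past a triple point; each of the three pairwise crossings is present both before and after, and its sign (left turn versus right turn, determined by the directings, which are unchanged) is preserved, so the crossing count is literally the same multiset on both sides.

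I expect the main obstacle to be purely bookkeeping: making the sign analysis of Reidemeister II genuinely \emph{independent of the directing} and of which strand lies on top, since there are several cases (two choices of over/under times the relative orientation of the two strands), and one must be careful that the left-turn/right-turn convention of Definition \ref{def_wbits} is being applied consistently. A clean way to do this is to reformulate the crossing sign as $\pm 1$ given by the usual right-hand rule on the oriented tangent vectors of over- and under-strand, check once that this agrees with the left-turn/right-turn description, and then the invariance under II and III follows from the elementary observation that the two new crossings in a II move receive opposite tangent-frame signs while the III move does not alter any tangent frames. One should also remember to note that twists and crossings are assumed to occur in disjoint regions of the diagram (per the Comment following Definition \ref{Kauffman-simplifications}), so the twist count and crossing count can be analyzed separately and the moves above exhaust the generating isotopies.
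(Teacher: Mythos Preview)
The paper does not actually prove this lemma; it simply cites Kauffman and closes with \qed. Your argument via the framed Reidemeister moves is the standard one and is essentially correct.

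There is one small gap in your list of generating moves. In addition to R2, R3, and the cancellation of a positive/negative twist pair, you need the move that trades a \emph{kink} (a small self-crossing loop of a single ribbon, with blackboard framing) for a single full twist of the matching sign; this is the isotopy noted after Theorem~\ref{Jones1}. Without it your moves do not generate all framed isotopies, since a kink cannot be removed by R2, R3, or twist-cancellation alone. Fortunately this move also preserves $w(L)$: a positive kink contributes one positive crossing ($+1$), and the positive full twist it becomes contributes $+1$ as well. Also, a minor point of bookkeeping: equation~\eqref{third-Kauff} is not a twist-cancellation move but the Kauffman-bracket relation assigning the scalar $-q^{3/2}$ to a single full twist, so it is not the right reference for your first move; the cancellation of opposite twists is simply an obvious ribbon isotopy and needs no citation.
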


\begin{Theorem} (see \cite[Theorem 1.5]{Ohtsuki}) \label{Jones1}
Let $L$ be any link. Then the Jones polynomial,
\begin{equation}
J(L):= (-q^{3/2})^{-w(L)} K(L),
\end{equation}
is independent of the framing. Hence $J(L)$ is an isotopy invariant of directed (but not framed) links. \qed
\end{Theorem}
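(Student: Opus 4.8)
The plan is to use the standard characterization of isotopy of unframed (but directed) links: two framed link diagrams represent the same unframed link if and only if they are related by a sequence of isotopies of the plane, Reidemeister II and III moves, and the framing-change move that inserts or deletes a positive or negative full twist on a single strand (the latter being exactly what distinguishes framed from unframed isotopy — it is the ``Reidemeister I'' move at the level of ribbons). Since the Kauffman bracket $K(L)$ is already known (by the theorem quoted just above) to be invariant under all framed-isotopy moves — planar isotopy, RII, RIII, and the twist relation \eqref{third-Kauff} — and since $(-q^{3/2})^{-w(L)}$ is a function of the writhe which the preceding lemma tells us is a framed-isotopy invariant, the product $J(L) = (-q^{3/2})^{-w(L)} K(L)$ is automatically invariant under every move \emph{except} possibly the framing-change move. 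So the entire content to check is: adding a single positive (resp.\ negative) full twist to a strand leaves $J(L)$ unchanged.

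First I would record how each side of $J(L)=(-q^{3/2})^{-w(L)}K(L)$ transforms when a positive full twist is inserted on a strand. By Definition \ref{def_wbits}(v), inserting a positive full twist increases $w(L)$ by exactly $1$, so the prefactor $(-q^{3/2})^{-w(L)}$ gets multiplied by $(-q^{3/2})^{-1}$. On the other hand, relation \eqref{third-Kauff} says that a positive full twist on a single framed string equals $-q^{3/2}$ times the untwisted string, so $K(L)$ gets multiplied by $-q^{3/2}$. These two factors are reciprocal, hence $J(L)$ is unchanged. The negative full twist is handled identically: it decreases $w(L)$ by $1$, multiplying the prefactor by $-q^{3/2}$, while \eqref{third-Kauff} read backwards (together with relation \eqref{third-Kauff} applied to the oppositely-oriented twist, which carries the factor $(-q^{3/2})^{-1}$) multiplies $K(L)$ by $(-q^{3/2})^{-1}$; again the net effect is trivial.

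Having shown $J(L)$ is unchanged by every generating move of unframed directed isotopy, I would conclude that $J(L)$ descends to a well-defined function on isotopy classes of directed unframed links, i.e.\ it is an isotopy invariant that does not depend on the framing. One should also remark that $J(L)$ still depends on the directing only through the writhe, and in fact the writhe of a knot diagram does not depend on the choice of orientation (reversing the orientation of a component reverses the orientation of both strands at each self-crossing, preserving the sign), which gives the parenthetical claim in Comment \ref{knots-wd} that for knots $J$ is orientation-independent as well.

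The main obstacle — really the only nontrivial point — is making precise the claim that unframed directed isotopy is generated by framed isotopy together with the single-strand twist insertion/deletion; this is the ``Reidemeister calculus for framed links'' and is exactly the fundamental fact being invoked, so in an expository account one would cite it rather than reprove it. Everything else is the bookkeeping above: matching the factor $-q^{3/2}$ coming from the skein twist relation \eqref{third-Kauff} against the factor $(-q^{3/2})^{-1}$ coming from the writhe exponent, and checking the two twist orientations separately. Since these factors were evidently designed to cancel, no genuine computation is required.
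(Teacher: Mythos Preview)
Your argument is correct and is the standard one. Note, however, that the paper does not actually prove this theorem: it is stated with a \qed\ and a citation to Ohtsuki, so there is no paper-internal proof to compare against. The only commentary the paper adds is the observation immediately following the theorem that a positive full twist (as in Definition~\ref{def_wbits}) is isotopic to a blackboard-framed curl, which is precisely the bridge between the usual Reidemeister~I formulation for link diagrams and the twist-insertion formulation for framed links that you invoke. Your write-up makes that bridge explicit and carries out the bookkeeping the paper leaves to the reference.
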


\begin{Comment} \label{knots-wd}
It is straightforward to see that positive full twists are sent to positive full twists if the direction of the ribbon is reversed, and positive crossings are sent to positive crossings if the directions of both ribbons involved are reversed. It follows that the choice of directing only affects the Jones polynomial for links with at least two components.
\end{Comment}

\section{The quantum group construction}

Here we describe the Jones polynomial as a $U_q(\fsl_2)$ quantum group link invariant. This uses a circle of ideas developed by a number of authors starting in the late 1980s (see \cite{Turaev} and references therein), making use of the famous Drinfel{$'$}d-Jimbo quantum groups \cite{Drin, Jimbo}. We try to give a  feel for how quantum group invariants work in general, but only fully develop the simplest case.

\subsection{The quantum group $U_q(\fsl_2)$ and its representations}

$U_q(\fsl_2)$ is an infinite dimensional algebra related to the Lie-algebra $\fsl_2$ of $2 \times 2 $ matrices with trace zero. It is the algebra over the field of rational functions $\bc(q)$ generated by $E,F, K$ and $K^{-1}$, subject to the relations
\begin{equation}
\begin{aligned}
&K K^{-1}=1, \\
&KEK^{-1} = q^2 E, \\
& KFK^{-1} = q^{-2} F, \\
& EF-FE= \frac{K-K^{-1}}{q-q^{-1}}.
\end{aligned}
\end{equation}
In some places below we must actually work over $\bc[q^{1/2}]$, which is to say we adjoin a chosen square root of $q$ to the field. 
 
$U_q(\fsl_2)$ has a representation $V_n$ of dimension $n+1$ for each integer $n \geq 0$, which we now describe. Introduce the ``quantum integers"
\begin{equation}
[n]:= \frac{q^n-q^{-n}}{q-q^{-1}} = q^{n-1}+q^{n-3} + \cdots + q^{-n+1}.
\end{equation}
The representation $V_n$ has $\bc(q)$-basis $\{ v_n, v_{n-2}, \cdots, v_{-n+2}, v_{-n} \}$, and the actions of $E,F$ and $K$ are given by
\begin{equation}
\begin{aligned}
E(v_{-n+2j})&=
\begin{cases}
[j+1] v_{-n+2j+2} \quad \text{ if } 0 \leq j < n \\
0 \quad \text{ if } j=n,
\end{cases}
\\
F(v_{n-2j})&=
\begin{cases}
[j+1] v_{n-2j-2} \quad \text{ if } 0 \leq j < n \\
0 \quad \text{ if } j=n,
\end{cases}
\\
K(v_k)&= q^k v_k.
\end{aligned}
\end{equation}
This can be expressed by the following diagram:
\begin{equation}
\setlength{\unitlength}{0.6cm}
\begin{picture}(15,3.5)
\put(1,2){\circle*{0.3}}
\put(3,2){\circle*{0.3}}
\put(5,2){\circle*{0.3}}

\put(11,2){\circle*{0.3}}
\put(13,2){\circle*{0.3}}
\put(15,2){\circle*{0.3}}

\put(7.5,2){$\ldots$}

\put(1.2,2.25){\vector(1,0){1.6}}
\put(3.2,2.25){\vector(1,0){1.6}}
\put(5.2,2.25){\vector(1,0){1.6}}

\put(9.2,2.25){\vector(1,0){1.6}}
\put(11.2,2.25){\vector(1,0){1.6}}
\put(13.2,2.25){\vector(1,0){1.6}}

\put(2.8,1.75){\vector(-1,0){1.6}}
\put(4.8,1.75){\vector(-1,0){1.6}}
\put(6.8,1.75){\vector(-1,0){1.6}}

\put(10.8,1.75){\vector(-1,0){1.6}}
\put(12.8,1.75){\vector(-1,0){1.6}}
\put(14.8,1.75){\vector(-1,0){1.6}}

\put(1.8,2.6){\footnotesize $1$}
\put(3.7,2.6){\footnotesize $[2]$}
\put(5.7,2.6){\footnotesize $[3]$}

\put(9.3,2.6){\footnotesize $[n-2]$}
\put(11.3,2.6){\footnotesize $[n-1]$}
\put(13.7,2.6){\footnotesize $[n]$}

\put(1.7,1){\footnotesize $[n]$}
\put(3.3,1){\footnotesize $[n-1]$}
\put(5.3,1){\footnotesize $[n-2]$}

\put(9.8,1){\footnotesize $[3]$}
\put(11.8,1){\footnotesize $[2]$}
\put(13.8,1){\footnotesize $1$}

\put(0.8,-0.3){$q^n$}
\put(2.8,-0.3){$q^{n-2}$}
\put(4.8,-0.3){$q^{n-4}$}
\put(10.8,-0.3){$q^{-n+4}$}
\put(12.8,-0.3){$q^{-n+2}$}
\put(14.8,-0.3){$q^{-n}$}

\put(-2,2.55){$F:$}
\put(-2,0.95){$E:$}
\put(-2,-0.4){$K:$}

\end{picture}
\end{equation}
\vspace{0.4cm}

There is a tensor product on representations of $U_q(\fsl_2)$, where the action on $a \otimes b \in A \otimes B$ is given by
\begin{equation}
\begin{aligned}
E(a \otimes b) & = Ea \otimes Kb+ a \otimes Eb, \\
F(a \otimes b) & = Fa \otimes b+ K^{-1}a \otimes Fb, \\
K(a \otimes b) & = Ka \otimes Kb.
\end{aligned}
\end{equation}
It turns out that $A \otimes B$ is always isomorphic to $B \otimes A$, and furthermore there is a well known natural system of isomorphisms
\begin{equation}
\sigma_{A,B}^{\br}: A \otimes B \rightarrow B \otimes A
\end{equation}
for each pair $A,B$, called the braiding. A definition of the braiding can be found in, for example \cite{CP} (or Theorem \ref{KR_th} below can also be used as the definition).
Here we only ever apply the braiding to the standard 2-dimensional representations of $U_q(\fsl_2)$, so we can use the following:

\begin{Definition} \label{easy-s-def} Let $V$ be the 2 dimensional representation of $U_q(\fsl_2)$. Use the ordered basis $\{ v_1 \otimes v_1, v_{-1} \otimes v_{1}, v_1 \otimes v_{-1}, v_{-1} \otimes v_{-1} \}$ for $V \otimes V$. Then $\sigma_{V,V}^{\br}: V \otimes V \rightarrow V \otimes V$ is given by the matrix
\begin{equation*}
\sigma^{\br}= 
q^{-1/2} \left(
\begin{array}{cccc}
q&0&0&0 \\
0&q-q^{-1}&1&0\\
0&1&0&0 \\
0&0&0&q
\end{array}
\right).
\end{equation*}
\end{Definition}

There is a standard action of $U_q(\fsl_2)$ on the dual vector space to $V_n$. This is defined using the ``antipode" $S$, which is the algebra anti-automorphism defined on generators by:
\begin{equation}
\begin{aligned}
&S(E)=-EK^{-1}, \\
&S(F)=-K F, \\
&S(K)=K^{-1}.
\end{aligned}
\end{equation}
For $\hat{v} \in V_n^*$ and $X \in U_q(\fsl_2)$, let $X \cdot \hat v$ be the element of $V_n^*$ defined by 
\begin{equation}
(X \cdot \hat v) (w) := \hat v (S(X) w)
\end{equation}
for all $w \in V_n$. It is straightforward to check that this is a left action of $U_q(\fsl_2)$ on $V_n^*$.
It turns out that $V_n$ is always isomorphic to $V_n^*$, which will be important later on. 

\begin{Example} \label{an-iso}
Let $v_1,v_{-1}$ be the basis for $V$. For $i=\pm 1$, let $\hat v_i$ be the element of $V^*$ defined by
\begin{equation}
\hat v_i (v_j)= \delta_{i,j}.
\end{equation}
Calculating using the above definition, the action of $U_q(\fsl_2)$ on $V^*$ is given by
\setlength{\unitlength}{1cm}
\begin{equation}
\begin{picture}(6,0.8)
\put(0,-1){
\begin{picture}(6,2)
\thicklines
\put(2,1){$\hat v_{-1}$}
\put(5,1){$\hat v_1$,}

\put(2.6,1.2){\vector(1,0){2.3}}
\put(4.9,0.9){\vector(-1,0){2.3}}

\put(0.5,1.5){$F:$}
\put(3.5,1.5){$-q^{-1}$}
\put(0.5,0.35){$E:$}
\put(3.5,0.35){$-q^{}$}
\end{picture}}
\end{picture}
\end{equation}

\vspace{0.7cm}

\noindent Consider the map of vector spaces $f: V \rightarrow V^*$ defined by
\begin{equation}
\begin{cases}
f(v_1)= \hat v_{-1} \\
f(v_{-1}) = -q^{-1} \hat v_1
\end{cases}
\end{equation}
One can easily check that $f$ is an isomorphism of $U_q(\fsl_2)$ representations.
\end{Example} 

\begin{Comment}
If one sets $q=1$, the representations $V_n$ described above are exactly the irreducible finite dimensional representations of the usual Lie algebra $\fsl_2$, where one identifies 
\begin{gather}
E \leftrightarrow \left( \begin{array}{cc} 0&1 \\ 0&0 \end{array} \right), \quad F \leftrightarrow \left( \begin{array}{cc} 0&0 \\ 1&0 \end{array} \right), \quad \frac{K-K^{-1}}{q-q^{-1}} \leftrightarrow \left( \begin{array}{cc} 1&0 \\ 0&-1 \end{array} \right).
\end{gather}
Of course, one needs to be a bit careful about interpreting the third identification here, since it looks like you divide by $0$. This issue is addressed in \cite[Chapters 9 and 11]{CP}. 
For us, this is sufficient justification for thinking of $U_q(\fsl_2)$ as related to ordinary $\fsl_2$.
\end{Comment}

\begin{Comment}
Notice that $K$ acts as the identity on all $V_n$ at $q=1$. $U_q(\fsl_2)$ actually has some other finite dimensional representations where $K$ does not act as the identity at $q=1$. So we have not described the full category of finite dimensional representation of $U_q(\fsl_2)$, but only the so called ``type {\bf 1}" representations. The other representations rarely appear in the literature.
\end{Comment}

\subsection{Ribbon elements and quantum traces}

Much of the following can be found in, for example, \cite[Chapter 4]{CP} or \cite{Ohtsuki}. The main difference here is that we work with two ribbon elements throughout. Each satisfies the definition of a ribbon element as in \cite{CP}. Consequently we also have two different quantum traces, and two different co-quantum traces. The non-standard ribbon element $Q_t$ is discussed extensively in \cite{half_twist}.

\begin{Definition} \label{ribbons} The ribbon elements $Q_s$ and $Q_t$ are elements in some completion of $U_q(\fsl_2)$ defined by
\begin{itemize}
\item The standard ribbon element $Q_s$ acts on $V_n$ as multiplication by the scalar $q^{-\frac{n^2}{2}-n}$. 

\item The ``non-standard" or ``half-twist" ribbon element $Q_t$ acts on $V_n$ as multiplication by the scalar $(-1)^nq^{-\frac{n^2}{2}-n}$.
\end{itemize}
\end{Definition}

\begin{Definition} \label{group-likes} The ``grouplike elements" associated to $Q_s$ and $Q_t$ are elements in some completion of $U_q(\fsl_2)$ defined by
\begin{itemize}
\item $g_s$ acts on $v_{n-2j} \in V_n$ as multiplication by $q^{n-2j}$. 

\item $g_t$  acts on $v_{n-2j} \in V_n$ as multiplication by $(-1)^n q^{n-2j}$. 
\end{itemize}
\end{Definition}

\begin{Comment}
The grouplike elements in Definition \ref{group-likes} are related to the ribbon elements in Definition \ref{ribbons} as described in \cite[Chapter 4.2C]{CP}. 
\end{Comment}

\begin{Definition} \label{ev-maps} (see \cite[Section 4.2]{Ohtsuki}) 
 Define the following maps:
\begin{enumerate}
\item $\ev$ is the evaluation map $V^* \otimes V \rightarrow \bc(q)$.

\item $\qtr_{Q_s}$ is the standard quantum trace map $V \otimes V^* \rightarrow \bc(q)$ defined by, for $\phi \in \End(V)=V \otimes V^* $,   $\qtr_{Q_s}(\phi)= \trace(\phi \circ g_s)$. 

\item $\qtr_{Q_t}$ is the ``half-twist" quantum trace map $V \otimes V^* \rightarrow \bc(q)$ defined by, for $\phi \in \End(V)=V \otimes V^* $,   $\qtr_{Q_t}(\phi)= \trace(\phi \circ g_t)$.

\item $\coev$ is the coevaluation map $\bc(q) \rightarrow V \otimes V^*$ defined by $\coev(1)= \Id$, where $\Id$ is the identity map in $\End(V)= V \otimes V^*$.

\item $\coqtr_{Q_s}$ is the standard co-quantum trace map $\bc(q) \rightarrow V^* \otimes V$ defined by $\coqtr_{Q_s}(1) = (1 \otimes g_s^{-1})  \circ \Flip \circ \coev (1),$ where $\Flip$ means interchange the two tensor factors. 

\item $\coqtr_{Q_t}$ is the ``half-twist" co-quantum trace map $\bc(q) \rightarrow V^* \otimes V$ defined by $\coqtr_{Q_t}(1) = (1 \otimes g_t^{-1})  \circ \Flip \circ \coev (1).$
\end{enumerate}
\end{Definition}

\begin{Comment} Although this may not be obvious, the maps in Definition \ref{ev-maps} are all morphisms of $U_q(\fsl_2)$ representations. This can be checked directly.
\end{Comment}

\begin{Comment}
It is often useful to express the maps from Definition \ref{ev-maps} explicitly. One finds that, for all $f \in V^*$ and $v \in V$,  
\begin{equation}
\begin{aligned}
& \ev (f \otimes v) = f(v), \\
& \qtr_Q( v \otimes f) = f(gv), \\
& \coev(1)= \sum_i e_i \otimes e^i, \\
& \coqtr_Q(1) = \sum_i e^i \otimes g^{-1} e_i. 
\end{aligned}
\end{equation}
Here $\{ e^i \}$ and  $\{ e_i \}$ are any dual bases for $V^*$ and $V$. 
One can choose $Q$ to be either $Q_s$ or $Q_t$, and then one must use the grouplike element $g_s$ or $g_t$ accordingly. 
\end{Comment}

\subsection{Two topological categories}
Quantum group knot invariants work by constructing a functor from a certain topological category to the category of representations of the quantum group. We now define the relevant topological category. In fact, we need two slightly different topological categories. 

\setlength{\unitlength}{0.6cm}
\begin{Definition}
 $\DRIBBON$ (directed orientable topological ribbons) is the category where:
 
$\bullet$ An object consists of a finite number of disjoint closed intervals on the real line each directed either up or down. These are considered up to isotopy of the real line. For example:
\vspace{-0.7cm}
\begin{center}
$$\uuv[] \uuv[] \udv[] \udv[] \udv[] \uuv[].$$
\end{center}
`
$\bullet$ A morphism between two objects $A$ and  $B$ consists of a ``tangle of orientable, directed ribbons"  in ${\Bbb R}^2 \times I$, whose ``loose ends" are exactly $(A, 0, 0)  \cup (B,0, 1) \subset {\Bbb R} \times {\Bbb R} \times I $, such that the direction (up or down) of each interval in $A \cup B$ agrees with the direction of the ribbon whose end lies at that interval. 
These are considered up to isotopy. For technical details of the definition of  ``a ribbon", see \cite{CP}.

$\bullet$ Composition of two morphisms is given by stacking them on top of each other, and then shrinking the vertical axis by a factor of two. For example,
\setlength{\unitlength}{0.5cm}
\begin{center}
\begin{picture}(15,3.8)

\put(2,-0.7){\pudv[]}
\put(2,2.7){\puuv[]}
\put(-0.04,-0.5){\pplaincrossing}
\put(0,2.7){\pudv[]}
\put(0,-0.7){\puuv[]}

\put(4.5,1.5){$\circ$}

\put(5.2,2.5){$\uudcup[]$}

\put(10, 1.5){$=$}

\put(14,2.7){\puuv[]}
\put(12.012,1.15){\halfpplaincrossing}
\put(12,2.7){\pudv[]}

\put(12.05,0.8){\halfucup.}

\end{picture}
\end{center}

$\bullet$ This is a monoidal category, where the identity object is ``zero intervals" and the tensor product just places objects and morphisms next to each other. 
\end{Definition}

\vspace{0.15cm}

\begin{Definition}
$\RIBBON$ (undirected orientable topological ribbons) is the category obtained from $\DRIBBON$ by forgetting the directings. So an object consists of a finite number of disjoint closed intervals on the real line, a morphism consists of a tangle of undirected ribbons, and composition is still stacking of tangles. 
\end{Definition}

\subsection{The functor}

The following holds in much greater generality than stated here.

\begin{Theorem} \label{ribbonfunctor} (see \cite[Theorem 5.3.2]{CP}) Let $V$ be the standard 2 dimensional representation of $U_q(\fsl_2)$. For each ribbon element $Q$ (i.e. $Q_s$ or $Q_t$), there is a unique monoidal functor $\cF_Q$ from $\DR$ to $U_q(\fsl_2) \text{-rep}$ such that
\begin{enumerate}

\item $\cF_Q(\uuv[])=V$ and $\cF_Q(\udv[])=V^*$,

\item $\begin{array}{ll} \cF_Q \left( \uducap[]  \right)= \ev, & \cF_Q \left( \uudcap[] \right)= \qtr_Q, \\ \\
\cF_Q \left( \uudcup[] \right)= \coev, & \cF_Q \left( \uducup[] \right)= \coqtr_Q,
\end{array}$
\vspace{0.3cm}

\setlength{\unitlength}{0.5cm}
\item \label{ft} $\cF_Q \left( \begin{picture}(2.2,2.2)
\put(0,0.9){\twist}
\put(0,-1.1){\usltwist}
\end{picture} \right) = Q$ as an automorphism of either $V$ or $V^*$. 

\vspace{0.15in}

\setlength{\unitlength}{0.7cm}
\item \label{eq:asd} $\cF_Q \left( \plaincrossing \right) = \sigma^{\br}$

\noindent as a morphism from the tensor product of the bottom two objects to the tensor product of the top two objects, regardless of the directions of the ribbons. 

\end{enumerate}
The object consisting of no intervals is sent to the ``trivial" 1-dimensional representation $V_0$. 
\qed
\end{Theorem}

\begin{Comment}
Since we only explicitly defined $\sigma^{\br}$ acting on $V \otimes V$, one must be cautious in interpreting \eqref{eq:asd}
when one or both of the ribbons is directed down: One must first choose an explicit isomorphism from $V^*$ to $V$, apply $\sigma^{\br}$, then apply the inverse of that morphism. By naturality, the resulting morphism $\sigma^{\br}$ will not depend on this choice. See Example \ref{qtr-Ex}.
\end{Comment}

\begin{Comment}
Theorem \ref{ribbonfunctor} can in theory be proven by directly verifying invariance under various local isotopies of the ``tangle" diagram, but in fact the usual method is much cleverer, and uses the fact that our morphisms (braiding, ribbon element, evaluation and so on) are defined on all representations, not just $V_1$. In particular, it is useful to consider $\sigma_{V \otimes V, V}^{\br}$. 
\end{Comment}

For any directed framed link $L$, one can draw $L$ as a composition of the elementary features in Theorem \ref{ribbonfunctor}, and hence find the morphism associated to $L$. This is a morphism from the identity object to itself in the category of $U_q(\fsl_2)$ representations, which is just multiplication by a rational function in $q^{1/2}$ (which turns out to be a Laurent polynomial in $q^{1/2}$). By Theorem \ref{ribbonfunctor}, $\cF_Q$ is well defined up to isotopy, so $\cF_Q(L)$, is an isotopy invariant. 

\begin{Example} \label{qtr-Ex}
Here is a way to verify the definition of quantum trace. Recall that $\cF_Q$ is supposed to be defined on $\DRIBBON$, and morphisms there are ribbon tangles \emph{up to isotopy}. One can use an isotopy to change a right going cap to the composition of a twist, a crossing, and a left going cap. But we have only defined $\sigma^{\br}$ explicitly on $V$, not $V^*$, so we insert copies of the isomorphisms $f: V \rightarrow V^*$ and $f^{-1}: V^* \rightarrow V$ (see Example \ref{an-iso}). By naturality of $\sigma^{\br}$, this should not change anything. Diagrammatically,

\setlength{\unitlength}{0.5cm}
\thicklines
\begin{center}
\begin{picture}(10,9.1)
\selectcolormodel{gray}
\definecolor{lg}{gray}{0.93}

\put(7.75,2.1){\udv[],}

\put(8.35,2.5){\color{lg} \line(0,1){0.63}}
\put(8.4,2.5){\color{lg} \line(0,1){0.63}}
\put(8.45,2.5){\color{lg} \line(0,1){0.63}}
\put(8.5,2.5){\color{lg} \line(0,1){0.63}}
\put(8.55,2.5){\color{lg} \line(0,1){0.63}}
\put(8.6,2.5){\color{lg} \line(0,1){0.63}}
\put(8.65,2.5){\color{lg} \line(0,1){0.63}}
\put(8.7,2.5){\color{lg} \line(0,1){0.63}}
\put(8.75,2.5){\color{lg} \line(0,1){0.63}}
\put(8.8,2.5){\color{lg} \line(0,1){0.63}}
\put(8.85,2.5){\color{lg} \line(0,1){0.63}}
\put(8.9,2.5){\color{lg} \line(0,1){0.63}}
\put(8.95,2.5){\color{lg} \line(0,1){0.63}}
\put(9.00,2.5){\color{lg} \line(0,1){0.63}}
\put(9.05,2.5){\color{lg} \line(0,1){0.63}}
\put(9.10,2.5){\color{lg} \line(0,1){0.63}}
\put(9.15,2.5){\color{lg} \line(0,1){0.63}}
\put(9.20,2.5){\color{lg} \line(0,1){0.63}}
\put(9.25,2.5){\color{lg} \line(0,1){0.63}}

\put(8.35,7.3){\color{lg} \line(0,1){0.77}}
\put(8.4,7.3){\color{lg} \line(0,1){0.77}}
\put(8.45,7.3){\color{lg} \line(0,1){0.77}}
\put(8.5,7.3){\color{lg} \line(0,1){0.77}}
\put(8.55,7.3){\color{lg} \line(0,1){0.77}}
\put(8.6,7.3){\color{lg} \line(0,1){0.77}}
\put(8.65,7.3){\color{lg} \line(0,1){0.77}}
\put(8.7,7.3){\color{lg} \line(0,1){0.77}}
\put(8.75,7.3){\color{lg} \line(0,1){0.77}}
\put(8.8,7.3){\color{lg} \line(0,1){0.77}}
\put(8.85,7.3){\color{lg} \line(0,1){0.77}}
\put(8.9,7.3){\color{lg} \line(0,1){0.77}}
\put(8.95,7.3){\color{lg} \line(0,1){0.77}}
\put(9.00,7.3){\color{lg} \line(0,1){0.77}}
\put(9.05,7.3){\color{lg} \line(0,1){0.77}}
\put(9.10,7.3){\color{lg} \line(0,1){0.77}}
\put(9.15,7.3){\color{lg} \line(0,1){0.77}}
\put(9.20,7.3){\color{lg} \line(0,1){0.77}}
\put(9.25,7.3){\color{lg} \line(0,1){0.77}}

\put(6,8){\halfucap}

\put(6,5.35){\otherhalfpplaincrossing}

\put(6.05,8){\line(1,0){1.5}}
\put(6.05,7.3){\line(1,0){1.5}}
\put(7.55,7.3){\line(0,1){0.7}}
\put(6.05,8){\line(0,-1){0.7}}

\put(6.55,7.5){\tiny$f$}

\put(8.05,3.8){\line(1,0){1.5}}
\put(8.05,3.1){\line(1,0){1.5}}
\put(9.55,3.1){\line(0,1){0.7}}
\put(8.05,3.8){\line(0,-1){0.7}}

\put(8.4,3.3){\tiny$f^{-1}$}

\put(6.35,2.3){\color{lg} \line(0,1){3.5}}
\put(6.4,2.3){\color{lg} \line(0,1){3.5}}
\put(6.45,2.3){\color{lg} \line(0,1){3.5}}
\put(6.5,2.3){\color{lg} \line(0,1){3.5}}
\put(6.55,2.3){\color{lg} \line(0,1){3.5}}
\put(6.6,2.3){\color{lg} \line(0,1){3.5}}
\put(6.65,2.3){\color{lg} \line(0,1){3.5}}
\put(6.7,2.3){\color{lg} \line(0,1){3.5}}
\put(6.75,2.3){\color{lg} \line(0,1){3.5}}
\put(6.8,2.3){\color{lg} \line(0,1){3.5}}
\put(6.85,2.3){\color{lg} \line(0,1){3.5}}
\put(6.9,2.3){\color{lg} \line(0,1){3.5}}
\put(6.95,2.3){\color{lg} \line(0,1){3.5}}
\put(7,2.3){\color{lg} \line(0,1){3.5}}
\put(7.05,2.3){\color{lg} \line(0,1){3.5}}
\put(7.1,2.3){\color{lg} \line(0,1){3.5}}
\put(7.15,2.3){\color{lg} \line(0,1){3.5}}
\put(7.2,2.3){\color{lg} \line(0,1){3.5}}

\put(6.3,5.8){\line(0,-1){3.5}}
\put(6.33,5.8){\line(0,-1){3.5}}
\put(6.35,5.8){\line(0,-1){3.5}}

\put(7.3,5.8){\line(0,-1){3.5}}
\put(7.33,5.8){\line(0,-1){3.5}}
\put(7.35,5.8){\line(0,-1){3.5}}

\put(8.3,3.1){\line(0,-1){0.7}}
\put(8.33,3.1){\line(0,-1){0.7}}
\put(8.35,3.1){\line(0,-1){0.7}}

\put(9.3,3.1){\line(0,-1){0.7}}
\put(9.33,3.1){\line(0,-1){0.7}}
\put(9.35,3.1){\line(0,-1){0.7}}

\put(8.3,8.1){\line(0,-1){0.8}}
\put(8.33,8.1){\line(0,-1){0.8}}
\put(8.35,8.1){\line(0,-1){0.8}}

\put(9.3,8.1){\line(0,-1){0.8}}
\put(9.33,8.1){\line(0,-1){0.8}}
\put(9.35,8.1){\line(0,-1){0.8}}

\put(8.0,4.75){\halfpsultwist}
\put(8.0,3.78){\halfpusltwist}

\put(5.75,2.1){\uuv[]}

\put(4.6,5){$\simeq$}

\put(0,7.2){\ucap}

\put(1.75,2.1){\udv[]}

\put(0.43,2.3){\color{lg} \line(0,1){4.97}}
\put(0.49,2.3){\color{lg} \line(0,1){5}}
\put(0.55,2.3){\color{lg} \line(0,1){5}}
\put(0.61,2.3){\color{lg} \line(0,1){5}}
\put(0.67,2.3){\color{lg} \line(0,1){5}}
\put(0.73,2.3){\color{lg} \line(0,1){5}}
\put(0.79,2.3){\color{lg} \line(0,1){5}}
\put(0.85,2.3){\color{lg} \line(0,1){5}}
\put(0.91,2.3){\color{lg} \line(0,1){5}}
\put(0.97,2.3){\color{lg} \line(0,1){5}}
\put(1.03,2.3){\color{lg} \line(0,1){5}}
\put(1.09,2.3){\color{lg} \line(0,1){5}}
\put(1.15,2.3){\color{lg} \line(0,1){5}}
\put(1.21,2.3){\color{lg} \line(0,1){5}}
\put(1.27,2.3){\color{lg} \line(0,1){5}}

\put(0.3,2.3){\line(0,1){5}}
\put(0.33,2.3){\line(0,1){5}}
\put(0.35,2.3){\line(0,1){5}}

\put(1.3,2.3){\line(0,1){5}}
\put(1.33,2.3){\line(0,1){5}}
\put(1.35,2.3){\line(0,1){5}}

\put(2.43,2.549){\color{lg} \line(0,1){5.001}}
\put(2.49,2.549){\color{lg} \line(0,1){5.001}}
\put(2.55,2.549){\color{lg} \line(0,1){5.001}}
\put(2.61,2.549){\color{lg} \line(0,1){5.001}}
\put(2.67,2.549){\color{lg} \line(0,1){5.001}}
\put(2.73,2.549){\color{lg} \line(0,1){5.001}}
\put(2.79,2.549){\color{lg} \line(0,1){5.001}}
\put(2.85,2.549){\color{lg} \line(0,1){5.001}}
\put(2.91,2.549){\color{lg} \line(0,1){5.001}}
\put(2.97,2.549){\color{lg} \line(0,1){5.001}}
\put(3.03,2.549){\color{lg} \line(0,1){5.001}}
\put(3.09,2.549){\color{lg} \line(0,1){5.001}}
\put(3.15,2.549){\color{lg} \line(0,1){5.001}}
\put(3.21,2.549){\color{lg} \line(0,1){5.001}}
\put(3.27,2.549){\color{lg} \line(0,1){4.75}}

\put(2.3,2.3){\line(0,1){5}}
\put(2.33,2.3){\line(0,1){5}}
\put(2.35,2.3){\line(0,1){5}}

\put(3.3,2.3){\line(0,1){5}}
\put(3.33,2.3){\line(0,1){5}}
\put(3.35,2.3){\line(0,1){5}}

\put(-0.25,2.1){\uuv[]}

\end{picture}
\end{center}

\vspace{-1cm}

\noindent where the boxes in the diagram mean ``put in a copy of the isomorphism $f$. Such ``tangles with coupons" are defined precisely in e.g. \cite{CP}. Algebraically, this says
 \begin{equation}
\qtr_Q = \ev  \circ \sigma^{\br} \circ  (\Id \otimes Q^{-1}) = \ev \circ (f \otimes \Id) \circ \sigma^{\br} \circ  (\Id \otimes Q^{-1}) \circ (\Id \otimes f^{-1}).
\end{equation}
Since the action of each element on the right side has been explicitly defined, one can now check that the two sides agree on all basis vectors, using either $Q_s$ or $Q_t$.  
\end{Example}

\section{Matching the two constructions} \label{matching}
The ideas in this section can mostly be found in \cite[Appendix H]{Ohtsuki}.

\subsection{The standard relationship (with the minus sign)}

\begin{Theorem} (see \cite[Theorem 4.19]{Ohtsuki}) \label{with_m} Fix a framed link $L$. Then $\cF_{Q_s}(L)$ is independent of the choice of directing of $L$. Furthermore, $\cF_{Q_s}(L)= (-1)^{w(L)+\# L} K(L),$ where $w(L)$ is the writhe of $L$ and $\# L$ is the number of components of $L$. \qed \end{Theorem}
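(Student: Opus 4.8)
I would establish the identity of scalars $\cF_{Q_s}(L)=(-1)^{w(L)+\#L}K(L)$ for one fixed (but arbitrary) directing of $L$, and then get directing--independence of $\cF_{Q_s}(L)$ for free: the right--hand side does not depend on the directing, since $K(L)$ and $\#L$ plainly do not, and neither does the parity of $w(L)$ --- reversing the direction of one component leaves the sign of each of its twists and of each of its self--crossings unchanged (Comment~\ref{knots-wd}) and flips the sign of each of its crossings with any other component, of which there are an even number because two closed plane curves meet an even number of times. (One could instead prove directing--independence directly, from the isomorphism $f\colon V\xrightarrow{\sim}V^{*}$ of Example~\ref{an-iso} together with the fact that $Q_s$ acts by the same scalar on $V_n$ and $V_n^{*}$; but deducing it from the formula is cheaper.)

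\textbf{Induction on crossings and twists.} I would induct on the number $c+t$ of crossings and twists in a diagram $D$ of $L$. If $c=t=0$ then $L$ is a disjoint union of $\#L$ trivial circles; by monoidality of $\cF_{Q_s}$ and since a single circle is sent to $\qtr_{Q_s}(\Id_V)=q+q^{-1}$, we get $\cF_{Q_s}(L)=(q+q^{-1})^{\#L}=(-1)^{\#L}(-q-q^{-1})^{\#L}=(-1)^{w(L)+\#L}K(L)$, using Kauffman's relation \eqref{circle_Kauffman} and the disjointness rule of Definition~\ref{Kauffman-simplifications}. If $D$ contains a positive full twist, delete it to get $D'$: then $K(D)=-q^{3/2}K(D')$ by \eqref{third-Kauff}, while $\cF_{Q_s}(D)=q^{3/2}\cF_{Q_s}(D')$ because the positive twist is sent to $Q_s^{-1}$, which acts on $V=V_1$ as multiplication by $q^{3/2}$ (Definition~\ref{ribbons}, Theorem~\ref{ribbonfunctor}; recall $\cF_Q$ sends the \emph{negative} twist to $Q$). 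Since deleting a positive twist lowers $w$ by one and fixes $\#L$, the sign from $(-1)^{w}$ exactly absorbs the discrepancy between $-q^{3/2}$ and $q^{3/2}$, so the hypothesis propagates; negative twists are symmetric.

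\textbf{The crossing step.} This is the crux. From Definition~\ref{easy-s-def}, $\sigma^{br}$ acts as $q^{1/2}$ on the three--dimensional subrepresentation $V_2\subset V\otimes V$ and as $-q^{-3/2}$ on the complementary trivial line, so
\[
\sigma^{br}=q^{1/2}\,\Id_{V\otimes V}+q^{-1/2}\,\mathbf e,\qquad (\sigma^{br})^{-1}=q^{-1/2}\,\Id_{V\otimes V}+q^{1/2}\,\mathbf e,
\]
where $\mathbf e$ is the rank--one ``turnback'' (a cap followed by a cup, realised via $f$), satisfying $\mathbf e^{2}=-(q+q^{-1})\mathbf e$. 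These are precisely Kauffman's relation \eqref{ocross_Kauffman} for the two choices of over/under strand, with $\Id_{V\otimes V}$ the ``vertical'' smoothing and $\mathbf e$ the ``turnback'' smoothing. The one subtlety --- which is the whole point --- is that the vertical smoothing $D_0$ is again a legitimate directed diagram, whereas the turnback smoothing $D_\infty$ carries a directing clash: to see it as $\cF_{Q_s}$ of a genuine object of $\DRIBBON$ one must reverse the directing along an arc, which is implemented by inserting $f$ (equivalently, by closing a component off with the group--like $g_s$ rather than with an ``undirected'' cap--cup), and this is exactly where $\cF_{Q_s}$ fails to factor through $\RIBBON$. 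I expect that bookkeeping this repair shows each crossing contributes one definite extra sign relative to the Kauffman calculus and reorganises the component counts of the intermediate diagrams, so that --- combined with the parity identity $w(D)\equiv c(D)+t(D)\pmod 2$ and the fact that smoothing a self--crossing changes $\#$ by $0$ or $1$ while smoothing a crossing between two distinct components lowers $\#$ by $1$ --- the inductive hypothesis propagates with the asserted overall sign $(-1)^{w+\#L}$.

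\textbf{Main obstacle.} The base and twist cases are short computations; all the work is in the crossing step, and within it in the sign/parity bookkeeping for the disoriented smoothing --- verifying that the sign produced by the $f$--repair, the sign $(-1)^{w}$, and the sign $(-1)^{\#L}$ conspire correctly in every configuration (self--crossing versus inter--component crossing, positive versus negative), using only the two parity facts above. This is the computation carried out in \cite[Appendix~H]{Ohtsuki}, and I would follow it.
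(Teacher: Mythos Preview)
Your outline is essentially the direct skein--theoretic argument that the paper cites from \cite[Appendix~H]{Ohtsuki}; your base case, twist case, and directing--independence paragraph are all correct. But the paper itself does not reprove the theorem this way. In the Comment immediately following, it sketches a different route: first establish Corollary~\ref{get_rid}, namely $\cF_{Q_t}(L)=K(L)$, via the undirected functor $\cF_f$ of Theorem~\ref{Fundir}; then observe that the ratio $\cF_{Q_s}(L)/\cF_{Q_t}(L)$ is an isotopy invariant which is blind to crossing changes (both functors use the \emph{same} braiding $\sigma^{br}$, and differ only at twists and at the $\qtr$/$\coqtr$ caps and cups, where $Q_t=-Q_s$ and $g_t=-g_s$ on $V$). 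Hence the ratio is determined by the unlink obtained after arbitrary crossing changes, i.e.\ by $\#L$ and by $w(L)\bmod 2$, and one reads off $(-1)^{w(L)+\#L}$ from an unlink with prescribed framings.

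The payoff of the paper's route is precisely that it avoids the obstacle you single out. Your decomposition $\sigma^{br}=q^{1/2}\Id+q^{-1/2}\mathbf e$ is correct as a linear--algebra identity, but the turnback $\mathbf e$ is \emph{not} $\cF_{Q_s}$ of a morphism in $\DRIBBON$: it is $\cF_f$ of an undirected cap--cup, and the two ways of inserting $f$ to make it directed agree only for $Q_t$ (this is exactly the content of Theorem~\ref{Fundir}\eqref{undir-cap}--\eqref{undir-cup} and the reason the paper introduces $Q_t$). So in your induction the $D_\infty$ term does not immediately match the inductive hypothesis for $\cF_{Q_s}$, and recovering the sign requires the case analysis you defer to Ohtsuki. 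By contrast, the paper pushes all of the skein work into the $Q_t$ setting where it is painless, and then transfers back to $Q_s$ by a one--line ratio argument.
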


The $(-1)^{w(L)+\# L}$ in Theorem \ref{with_m} is the sign referred to in the title of these notes. It is certainly explicitly defined, so it is not really a problem; just an annoyance. We now show that, by using $Q_t$ in place of $Q_s$, we can get rid of this sign (although in some sense this just moves the annoyance into the definition of the ribbon element).

There are two other good reasons to consider this modification. First, it allows us to construct a functor from a topological category related to framed but undirected links to $U_q(\fsl_2) \text{-rep}$. Second, it allows us to see how the skein relations used in defining the Kauffman bracket arise naturally in the quantum group construction. 

\subsection{The functor from $\RIBBON$}

There is only one ``elementary" object in $\RIBBON$ (the single interval), as opposed to two in $\DRIBBON$ (the single interval, but with two possible directions). Our morphism will send this single interval to the two dimensional representation $V$. We must then send each feature in the knot diagram to a morphism between the appropriate tensor powers of $V$. For instance,
\begin{center}
\ucap
\end{center}
\vspace{0.1cm}
should be sent to a morphism from $V \otimes V$ to the trivial object. This is as opposed to the directed case, where such ``caps" are sent to morphisms from $V^* \otimes V$ or $V \otimes V^*$ to the trivial object. To do this, we will use the fact that, in this case, $V$ is isomorphic to $V^*$  (for instance, via the isomorphism from Example \ref{an-iso}). We obtain:

\begin{Theorem}  \label{Fundir}
\thicklines
Choose an isomorphism $f: V \rightarrow V^*$. There is a unique monoidal functor $\cF_{f}: \RIBBON \rightarrow U_q(\fsl_2) \text{-rep}$ such that
\begin{enumerate}  \setlength{\unitlength}{0.28cm}

\item $\cF_{f}$ takes the object consisting of a single interval to $V$,

\vspace{0.1cm}

\item \label{undir-cap} 
$\displaystyle \cF_{f} \left( 
\begin{picture}(6,2.2)
\put(0,-1){\ucap}
\end{picture}
\right)
=
\ev \circ (f \otimes \Id) = \qtr_{Q_t} \circ (\Id \otimes f): V \otimes V \rightarrow \bc(q)$,

\vspace{0.1cm}

\item \label{undir-cup} $\displaystyle \cF_{f} \left( 
\begin{picture}(6,2.2)
\put(0,-1.2){\ucup}
\end{picture}
\right) 
=
(\Id \otimes f^{-1})\circ  coev  = (f^{-1} \otimes \Id) \circ  coqtr_{Q_t}: \bc(q) \rightarrow V \otimes V, $

\vspace{0.2cm}

 \setlength{\unitlength}{0.42cm} 
\item \label{undir-cross}  $\displaystyle \cF_{f} \left( 
\begin{picture}(5.5,2)
\put(-0.1,-1.1){\plaincrossing}
\end{picture}
\right) =
\sigma^{\br},$

\vspace{0.4cm}

\item \label{undir-twist} \setlength{\unitlength}{0.42cm} $\cF_{f} \left( \begin{picture}(2.4,2.8)
\put(0,0.9){\twist}
\put(0,-1.1){\usltwist}
\end{picture} \right) = Q_t \quad$ (or, equivalently, multiplication by $-q^{-3/2}$).
 \setlength{\unitlength}{0.5cm}
\end{enumerate}
Furthermore, for any link $L$, any choice of directing of $L$, and any choice of $f$, $\cF_{f}(L)= \cF_{Q_t}(L).$ 
\end{Theorem}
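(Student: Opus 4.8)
The plan is to prove Theorem~\ref{Fundir} in two stages: first establish existence and well-definedness of the functor $\cF_f$, and then show it agrees with $\cF_{Q_t}$ on links.

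\textbf{Existence and well-definedness.} The key observation is that $\RIBBON$ is obtained from $\DRIBBON$ by forgetting directings, so a functor out of $\RIBBON$ is the same thing as a functor out of $\DRIBBON$ that is ``blind'' to directings in a coherent way. I would make this precise by constructing, from the isomorphism $f: V \to V^*$, a natural family of isomorphisms that lets one intertwine $\cF_{Q_t}$ with the direction-reversal on objects. Concretely, define $\cF_f$ on the generating morphisms by the formulas in (\ref{undir-cap})--(\ref{undir-twist}), and on a general morphism by choosing \emph{any} directing, applying $\cF_{Q_t}$, and conjugating the source and target by appropriate tensor powers of $f$ (inserting $f$ wherever a strand was directed down, $\Id$ wherever it was directed up). The content is then: (a) this is independent of the chosen directing, and (b) it respects composition and the monoidal structure, i.e.\ it really is a functor. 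For (a), the crucial input is that $f$ is a morphism of $U_q(\fsl_2)$-representations, together with the fact --- implicit in Theorem~\ref{ribbonfunctor} and the Comment following it --- that $\cF_{Q_t}$ of a crossing is $\sigma^{br}$ regardless of directings, and that the cap/cup relations in (\ref{undir-cap}) and (\ref{undir-cup}) are compatible with the two ways of reading each local picture. I would verify the two displayed equalities in (\ref{undir-cap}) and (\ref{undir-cup}) directly: $ev \circ (f \otimes \Id) = qtr_{Q_t} \circ (\Id \otimes f)$ amounts to checking that $f$ intertwines $ev$-type and $qtr_{Q_t}$-type pairings, which in coordinates is the statement that $\sum_i e^i(gv)$-type sums match up after applying $f$; the appearance of $g_t$ (with its sign $(-1)^n$) rather than $g_s$ is exactly what makes this work with a \emph{single} isomorphism $f$ rather than forcing a sign mismatch. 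Uniqueness is automatic: the listed morphisms generate $\RIBBON$, so any functor satisfying (\ref{undir-cap})--(\ref{undir-twist}) is determined.

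\textbf{Agreement on links.} Given a link $L$ and a directing, present $L$ as a composition of the elementary features. Every feature of $L$ is one of: a crossing (sent to $\sigma^{br}$ by both functors), a twist (sent to $Q_t$ by both), or a cap/cup. For the caps and cups, the directed version $\cF_{Q_t}$ uses $ev, \qtr_{Q_t}, \coev, \coqtr_{Q_t}$ depending on the local directing, while $\cF_f$ uses the single undirected cap/cup composed with copies of $f$ and $f^{-1}$. The point is that since $L$ is a closed link, every occurrence of an ``$f$'' introduced by $\cF_f$ at a downward strand is eventually cancelled by a matching ``$f^{-1}$'' as one follows that strand around, because the strand must return; and the identities in (\ref{undir-cap}), (\ref{undir-cup}) are precisely what converts $\cF_{Q_t}$'s evaluation/quantum-trace maps into $\cF_f$'s $f$-decorated undirected ones at each cap/cup. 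So the two scalars $\cF_f(L)$ and $\cF_{Q_t}(L)$ are computed by contracting the same local tensors, and are equal. Independence of the directing of $L$ on the $\cF_{Q_t}$ side is already known from Theorem~\ref{with_m} (or follows from the present argument since $\cF_f(L)$ manifestly does not involve a directing), and independence of $f$ follows because $\cF_{Q_t}(L)$ does not involve $f$ at all.

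\textbf{Main obstacle.} The routine-but-delicate heart of the argument is step (a) of well-definedness: checking that reversing the direction of a single strand (a ``zig-zag'' / Reidemeister-I-type move at the level of directings, realized by composing with caps and cups) does not change $\cF_f$. This reduces to a handful of local identities among $f$, $f^{-1}$, $g_t$, $ev$, $\coev$, $\qtr_{Q_t}$, $\coqtr_{Q_t}$, and $\sigma^{br}$ --- essentially the rigidity (zig-zag) axioms and the naturality of the braiding, transported across $f$. I expect this to be where the sign $(-1)^n$ built into $Q_t$ and $g_t$ is genuinely used, and it is the one place where a careless computation with $Q_s$ instead would fail; everything else is bookkeeping. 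I would organize it as a short lemma: ``$f \otimes f$ intertwines $\sigma^{br}$ with itself and $f$ converts the $Q_t$-rigidity structure on $(V, V^*)$ into a symmetric self-duality of $V$,'' and then the theorem follows formally.
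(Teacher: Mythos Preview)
Your proposal is correct and uses the same core ingredients as the paper: the direct verification of the cap/cup identities in (\ref{undir-cap}) and (\ref{undir-cup}), the naturality of $\sigma^{br}$ with respect to $f$, and the trick of inserting $f\circ f^{-1}$ along downward strands. The difference is in the logical organization, and the paper's route is shorter.

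You set yourself the ``main obstacle'' of proving directly that $\cF_f$ is independent of the chosen directing (reversing a single strand, zig-zag moves, etc.), and only then compare with $\cF_{Q_t}$. The paper reverses this order and thereby eliminates that obstacle entirely: once the two expressions in (\ref{undir-cap}) and (\ref{undir-cup}) are checked to agree, $\cF_f$ is already a well-defined assignment on \emph{diagrams}. Then, for any tangle with any directing, one inserts $f\circ f^{-1}$ on each downward segment of $\cF_{Q_t}$, uses naturality of $\sigma^{br}$ to slide the $f$'s and $f^{-1}$'s to the cups and caps, and observes that the result is exactly the computation of $\cF_f$. This single argument simultaneously gives the equality $\cF_f(L)=\cF_{Q_t}(L)$ and, since $\cF_{Q_t}$ is already known to be isotopy-invariant by Theorem~\ref{ribbonfunctor}, shows that $\cF_f$ is a genuine functor. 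So the directing-independence you flag as the delicate step is never checked in isolation; it is inherited for free from $\cF_{Q_t}$. Your approach buys a more self-contained proof of well-definedness (not relying on Theorem~\ref{ribbonfunctor}), but at the cost of the extra local checks you describe; the paper's approach trades that for a two-line reduction.
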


\begin{Comment}
If one tries to use $Q_s$ instead of $Q_t$ in Theorem \ref{Fundir}, then the two expressions on the right side of parts \eqref{undir-cap} and \eqref{undir-cup} are off by a minus sign, and the construction does not work. That the two sides of \eqref{undir-cap} and \eqref{undir-cup} agree follows from the fact that $U_q(\fsl_2)\text{-rep}$, along with ``pivotal structure" related to the ribbon element $Q_t$, is unimodal, as defined in \cite{Turaev}. For an explanation of this pivotal structure and a proof that it is unimodal see \cite[Section 5B]{half_twist}. It is also not hard to directly verify that the expressions agree. 
\end{Comment}

\begin{Comment}
Theorem \ref{Fundir} implies that, for any link $L$, $\cF_{f}(L)$ is independent of the chosen isomorphism $f$. However, the functor $\cF_f$ does depend on this choice. For instance, $\cF_f$ applied to a cap clearly depends on $f$.
\end{Comment}

\begin{proof}[Sketch of proof of Theorem \ref{Fundir}]

First verify by a direct calculation that the two expressions on the right for parts \eqref{undir-cap} and \eqref{undir-cup} agree, so $\cF_{f}$ is well defined on framed link diagrams. 

Fix a diagram $L$ and choose a directing of $L$. Insert $ f \circ f^{-1}$ into $\cF_{Q_t}(L)$ somewhere along every segment of $L$ that is directed down. This clearly doesn't change the morphism. By the naturality of $\sigma^{\br}$, 
\begin{equation}
(1 \otimes f) \circ  \sigma^{\br} = \sigma^{\br} \circ (f \otimes 1).
\end{equation}
Also, $f \circ Q_t = Q_t \circ f$. 
Use these relations to pull all the $f$ and $f^{-1}$ through crossings until they are right next to cups and caps or ends. But now you are essentially calculating $\cF_{f}(L)$. Precisely, $\cF_{f} = \cF_{Q_t}$, composed with a copy of $f$ or $f^{-1}$ for every down-directed ending in the chosen directing. Since $\cF_{Q_t}$ is a functor, it follows that $\cF_f$ is as well. 
\end{proof}

\subsection{Appearance of skein relations in $U_q(\fsl_2) \text{-rep}$} \label{ss:qskein}
A simple calculation shows that
\begin{equation} \label{123_circle}
\setlength{\unitlength}{0.35cm}
\thicklines
\cF_{f} \left(
\begin{picture}(3,1.9)
\put(1.5,0.2){\circle{2.8}}
\end{picture} \right)
=
\text{ multiplication by } -q-q^{-1}.
\end{equation}
\noindent Another direct calculation shows that
\begin{equation} \sigma^{\br}= q^{1/2} \Id + q^{-1/2}  (\Id \otimes f^{-1})\circ  \coev  \circ \qtr_{Q_t} \circ (\Id \otimes f): V \otimes V \rightarrow V \otimes V.
\end{equation}
Equivalently, 
\thicklines
\begin{equation} \label{kauffman_g}
\setlength{\unitlength}{0.35cm}
\cF_{f} \left( 
\begin{picture}(3.5,1.9)
\put(0.2,-1.3){\line(1,1){3}}
\put(0.2,1.7){\line(1,-1){1.1}}
\put(2.2,-0.3){\line(1,-1){1.1}}
\end{picture}
\right) 
=
q^{1/2}
\cF_{f} \left(
\begin{picture}(3.5,1.9)
\put(0.2,-1.3){\line(0,1){3}}
\put(3.2,-1.3){\line(0,1){3}}
\end{picture}
\right)
+
q^{-1/2}
\cF_{f}
\left(
\begin{picture}(3.5,1.9)
\put(1.7,-1.3){\oval(3,2)[t]} 
\put(1.7,1.7){\oval(3,2)[b]} 
\end{picture}
\right).
\end{equation}

\noindent These are exactly the relations defining the Kauffman bracket (Definition \ref{Kauffman-simplifications})! 

\subsection{Fixing the minus sign}
Equation \eqref{third-Kauff} and Theorem \ref{Fundir}\eqref{undir-twist} are identical.
Along with the statements in \S\ref{ss:qskein} 
, this implies that the map from framed link diagrams to polynomials defined by $L \rightarrow \cF_{f}(L)$ satisfies all the skein relations used to calculate the Kauffman bracket $K(L)$, and hence must agree exactly with $K(L)$. That is:
 
\begin{Corollary} \label{get_rid}
Let $L$ be a framed link. Then $\cF_{Q_t}(L)$ is independent of the chosen directing, and is equal to the Kauffman bracket $K(L)$. \qed
\end{Corollary}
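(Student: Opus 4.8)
The plan is to deduce the Corollary almost entirely from Theorem \ref{Fundir} together with the two displayed computations \eqref{123_circle} and \eqref{kauffman_g} that immediately precede it, so that no genuinely new calculation is needed. The directing-independence is the easy half: Theorem \ref{Fundir} asserts that for \emph{any} choice of directing of $L$ and \emph{any} isomorphism $f \colon V \to V^*$ one has $\cF_{Q_t}(L) = \cF_{f}(L)$. Since the right-hand side $\cF_{f}(L)$ is computed in the undirected category $\RIBBON$ and therefore makes no reference whatsoever to a directing, the value $\cF_{Q_t}(L)$ cannot depend on the directing used to compute it. So the first assertion follows with no further work, and it remains only to identify $\cF_{f}(L)$ with $K(L)$.

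For the equality $\cF_{f}(L) = K(L)$, the strategy is to check that $\cF_{f}$ obeys every one of the local relations used to define $K$ in Definition \ref{Kauffman-simplifications}, and then to run the Kauffman reduction. Concretely, relation \eqref{ocross_Kauffman} (resolving a crossing as $q^{1/2}$ times the vertical smoothing plus $q^{-1/2}$ times the horizontal smoothing) is exactly Equation \eqref{kauffman_g}; the circle relation \eqref{circle_Kauffman} is exactly Equation \eqref{123_circle}; and disjoint multiplicativity holds because $\cF_{f}$ is a monoidal functor, so a disjoint union of tangle diagrams maps to a tensor product of morphisms and the scalars attached to closed components simply multiply. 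If framing twists are present, the twist relation \eqref{third-Kauff} matches Theorem \ref{Fundir}\eqref{undir-twist}, since both send a positive full twist to multiplication by $-q^{-3/2}$.

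With the matching of local relations in hand, I would finish as follows. The Kauffman bracket is computed by repeatedly applying \eqref{ocross_Kauffman} to remove crossings until the diagram becomes a disjoint union of framed circles, and then applying \eqref{circle_Kauffman} and multiplicativity to read off a polynomial in $q^{\pm 1/2}$. Because $\cF_{f}$ is a genuine functor, its value on a diagram equals its value on any diagram obtained by a local substitution that $\cF_{f}$ respects; hence applying $\cF_{f}$ to each intermediate linear combination produces the same scalar at every stage of the reduction. At the final crossingless stage the two procedures coincide term by term, each circle contributing the factor $-q-q^{-1}$. Therefore $\cF_{f}(L)$ equals the polynomial produced by the Kauffman algorithm, namely $K(L)$, and combining with the first paragraph gives $\cF_{Q_t}(L) = \cF_{f}(L) = K(L)$.

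The step I expect to require the most care is the passage from ``$\cF_{f}$ respects each local relation'' to ``$\cF_{f}(L) = K(L)$ globally.'' This rests on two facts that are already available but worth stating cleanly: that $K$ is well-defined, i.e.\ the output of the reduction is independent of the order in which crossings are resolved (this is the isotopy-invariance of the Kauffman bracket asserted earlier in Section \ref{Kauffman_bracket}), and that $\cF_{f}$ is well-defined on isotopy classes of $\RIBBON$ (which is the content of Theorem \ref{Fundir}). Given both, any single valid reduction sequence simultaneously computes $K(L)$ and $\cF_{f}(L)$, forcing them to agree, so no independent confluence check is needed.
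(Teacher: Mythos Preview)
Your argument is essentially the paper's own: use Theorem~\ref{Fundir} to identify $\cF_{Q_t}(L)$ with $\cF_f(L)$ (giving directing-independence for free), then observe that $\cF_f$ satisfies the Kauffman skein relations via \eqref{123_circle}, \eqref{kauffman_g}, monoidality, and the twist relation, so it computes $K(L)$. One small slip: you write that both \eqref{third-Kauff} and Theorem~\ref{Fundir}\eqref{undir-twist} send a positive full twist to $-q^{-3/2}$, but in fact the twist pictured in \eqref{third-Kauff} is the \emph{positive} full twist with value $-q^{3/2}$, while the twist in Theorem~\ref{Fundir}\eqref{undir-twist} is the \emph{negative} full twist with value $Q_t=-q^{-3/2}$; these are inverse twists with inverse scalars, which is exactly the compatibility needed, so your conclusion stands.
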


\begin{Comment}
Non-standard ribbon elements exist in many cases beyond $U_q(\fsl_2)$, and can also be used to simplify the correspondence between various constructions of link polynomials in those cases. 
\end{Comment}

\section{Another advantage: the half twist} \label{sec:ht}

Consider the following element $X$ in a certain completion of $U_q(\fsl_2)$:

\begin{Definition} \label{defX} $X$ is defined to act on each $V_n$ by $$Xv_{n-2j}  = (-1)^{n-j} q^{\frac{n^2}{4}+\frac{n}{2}} v_{-n+2j}.$$ 
\end{Definition}

One can easily check that $X^{-2}=Q_t$. Furthermore, work of Kirillov-Reshetikhin \cite[Theorem 3]{KR:1990} and Levendorskii-Soibelman \cite[Theorem 1]{LS} shows that $X$ is related to be braiding $\sigma^{\br}$ as follows (see \cite[Comment 7.3]{Rcommutor} for this exact statement):
\begin{Theorem} \label{KR_th}
$\sigma^{\br} = (X^{-1} \otimes X^{-1}) \circ \flip \circ \Delta(X).$ \qed
\end{Theorem}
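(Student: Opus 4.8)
The plan is to verify the identity directly as a statement about operators on the four-dimensional space $V \otimes V$, using the explicit matrix for $\sigma^{br}$ from Definition \ref{easy-s-def} and the explicit action of $X$ from Definition \ref{defX}; on $V = V_1$ the latter reads $Xv_1 = -q^{3/4}v_{-1}$ and $Xv_{-1} = q^{3/4}v_1$, so $X^{-1}\otimes X^{-1}$ is immediate.

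The one nontrivial ingredient is $\Delta(X)$ acting on $V\otimes V$. Although $X$ lives in a completion of $U_q(\fsl_2)$, Definition \ref{defX} specifies its action on every $V_n$, hence on every finite-dimensional type-$\mathbf{1}$ representation, and in particular $\Delta(X)$ makes sense as the operator by which $X$ acts on the tensor product module. So first I would work out the Clebsch--Gordan decomposition $V\otimes V \cong V_2 \oplus V_0$: the highest weight vector $v_1\otimes v_1$ generates the $V_2$-summand, applying $F$ via the coproduct $F(a\otimes b) = Fa\otimes b + K^{-1}a\otimes Fb$ produces the remaining basis vectors of $V_2$ (the weight-zero one carrying a factor $[2] = q+q^{-1}$), and the weight-zero vector $v_{-1}\otimes v_1 - q\, v_1\otimes v_{-1}$ spans $V_0$. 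Inverting this change of basis expresses $\{v_i\otimes v_j\}$ in terms of the $V_2\oplus V_0$ basis; then $X$ acts on the $V_2$-part by the weight-flipping operator $v_{2-2j}\mapsto (-1)^{2-j}q^2 v_{-2+2j}$ and as the identity on $V_0$, giving $\Delta(X)$ as an explicit $4\times 4$ matrix in the original tensor basis.

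With $\Delta(X)$ in hand, composing with $\flip$ and then with $X^{-1}\otimes X^{-1}$, and comparing entrywise with $q^{-1/2}\left(\begin{smallmatrix} q&0&0&0\\ 0&q-q^{-1}&1&0\\ 0&1&0&0\\ 0&0&0&q\end{smallmatrix}\right)$, finishes the proof. A convenient internal check while doing the Clebsch--Gordan step is that $\Delta(X)^2$ must equal $\Delta(X^2) = \Delta(Q_t^{-1})$, which acts as $q^4$ on the $V_2$-summand and as $1$ on the $V_0$-summand (since $Q_t$ acts on $V_n$ by $(-1)^nq^{-n^2/2-n}$); this catches any slip in the quantum-integer arithmetic.

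The main obstacle is purely bookkeeping: the denominators $[2]$ introduced by inverting the Clebsch--Gordan change of basis must cancel cleanly against numerators such as $q^2+1 = q[2]$, and one must keep the half-integer powers of $q$ coming from $X$ on $V$ straight. One should also double-check that the conventions are consistent --- that $\cF_{Q_t}$ sends the negative twist to $Q_t$ (Theorem \ref{ribbonfunctor}), that $Q_t = X^{-2}$, and that $\sigma^{br}$ here is the braiding acting on $V\otimes V$ in the basis of Definition \ref{easy-s-def} --- so that no stray sign or power of $q$ creeps in. A more conceptual route would start from the ribbon axiom $\Delta(Q_t) = (R_{21}R_{12})^{-1}(Q_t\otimes Q_t)$ together with $Q_t = X^{-2}$ and $\sigma^{br} = \flip\circ R$ and try to ``extract a square root'', but because $X$ is not central this requires additional care, and for these notes the explicit verification on $V\otimes V$ is the cleaner option.
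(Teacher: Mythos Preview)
The paper does not actually prove Theorem~\ref{KR_th}: it is stated with a \qed and attributed to Kirillov--Reshetikhin \cite{KR:1990} and Levendorskii--Soibelman \cite{LS}, whose arguments establish the identity in full generality (for arbitrary $U_q(\mathfrak{g})$, as an equality of operators on all type~$\mathbf{1}$ modules, via the quantum Weyl group). So there is no in-paper proof to compare against.

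Your plan is nonetheless sound and well suited to this expository setting. Since the paper only ever uses $\sigma^{br}$ on $V\otimes V$ (Definition~\ref{easy-s-def} is explicitly offered as a substitute for the general definition), a direct verification on that four-dimensional space is exactly what is wanted here, and your outline---decompose $V\otimes V\cong V_2\oplus V_0$, let $X$ act on each summand via Definition~\ref{defX}, change back to the tensor basis, then compose with $\flip$ and $X^{-1}\otimes X^{-1}$---is correct. Your identification of the singlet $v_{-1}\otimes v_1 - q\,v_1\otimes v_{-1}$ and of the scalars ($q^{3/4}$ on $V$, $q^2$ on $V_2$, $1$ on $V_0$) all check out. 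One small wording quibble: with the paper's normalisation $Fv_{n-2j}=[j+1]v_{n-2j-2}$, it is $F$ applied to the weight-zero vector of $V_2$ that produces the factor $[2]$, not the weight-zero vector itself; this does not affect the computation. You should state clearly at the outset that you are verifying the identity on $V\otimes V$ only, which suffices for everything in these notes, and defer the general statement to the cited references.
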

\noindent  Here $\Delta(X)$ means ``decompose $V \otimes V$ into irreducible components, and apply $X$ to each," and $Flip$ means interchange tensor factors. 
This can be interpreted via the following isotopy:

\setlength{\unitlength}{0.5cm}
\begin{equation}
\begin{picture}(9,2.23)
\put(0,-2){
\begin{picture}(9,5)
\put(0,0.2){\Dtst}
\put(0,3.2){\ptwist}
\put(2,3.2){\ptwist}

\put(4.5,2.5){$\simeq$}
\put(5.95,-0.3){\pplaincrossing}
\put(6,3.2){\uid}
\put(8,3.2){\uid}

\put(9.6,0.3){.}

\end{picture}}
\end{picture}
\end{equation}
\vspace{0.1cm}

\noindent Here $\Flip \circ \Delta(X)$ should be interpreted as a morphism corresponding to twisting both ribbons at once by 180 degree, as on the bottom of the left side. Putting this together, one may hope that $X$ could be interpreted as an isomorphism, and that the functor $\cF_{Q_t}$ from Theorem \ref{Fundir} could be extended in such a way that
\begin{equation}
\setlength{\unitlength}{0.5cm}
\cF_{Q_t} \left(
 \begin{picture}(2,1.25)
\put(-0.1,-0.1){\sultwist}
\end{picture} 
\right)= X^{-1}.
\end{equation}
In fact, such an extended functor has been defined precisely in \cite{half_twist}, resulting in a functor from a larger category where ribbons are allowed to twist by 180 degrees, not just by 360 degrees (although M\"obius bands are still not allowed). Figure \ref{amorphism} shows an example of a morphism in the resulting category. Notice that elementary objects come in both shaded and unshaded versions.

The construction in \cite{half_twist} can only extend $\cF_{Q_t}$, not $\cF_{Q_s}$. One advantage of having such an extended functor is that, since both $\sigma^{\br}$ and $Q_t$ are constructed in term of the ``half-twist" $X$, there is in some sense one less elementary feature.

\begin{figure}
$$\mathfig{0.5}{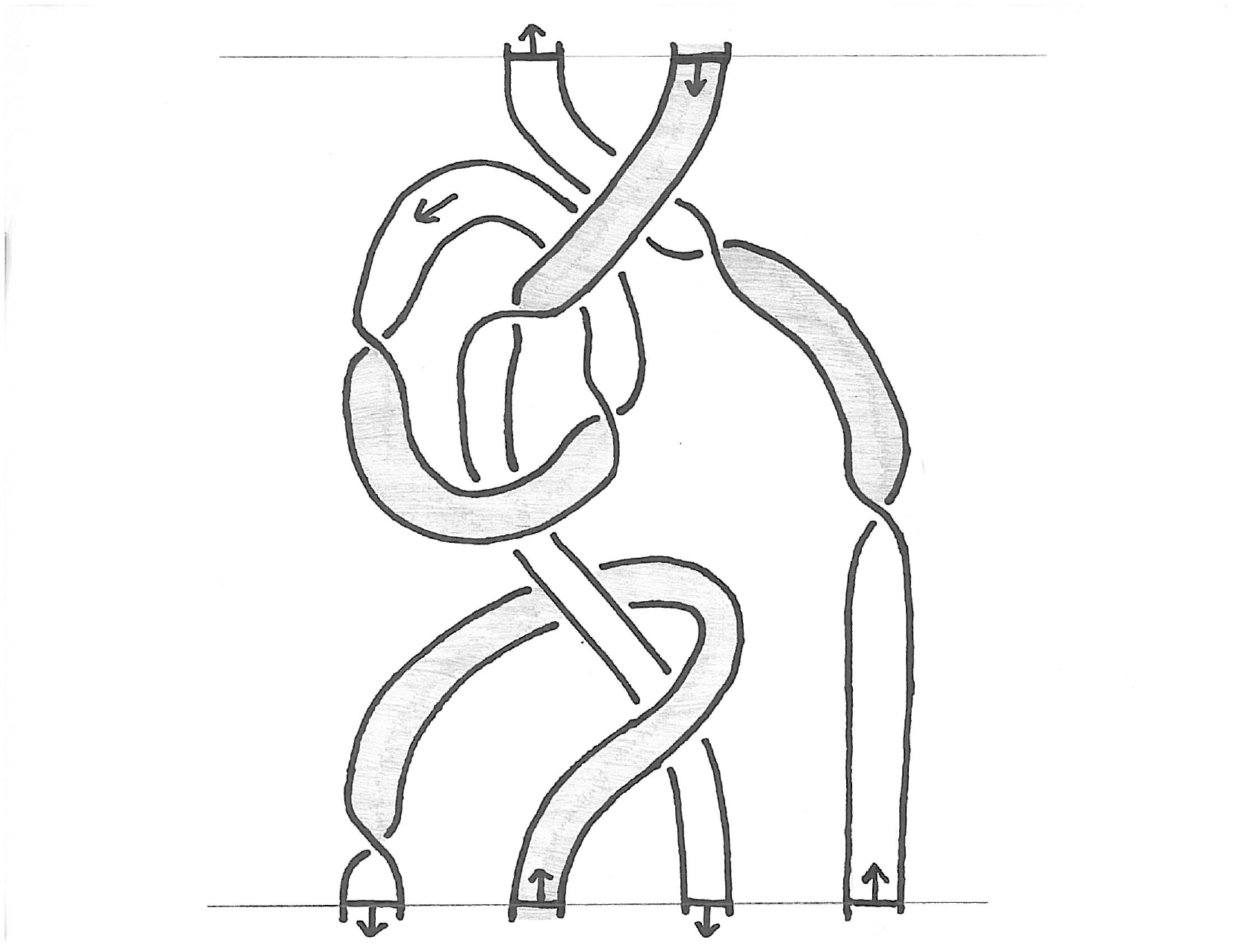}$$

\caption{A morphism in the topological category of ribbons with half twists \label{amorphism}}
\end{figure}

\end{document}